\let\oldtocsection=\tocsection
\let\oldtocsubsection=\tocsubsection
\let\oldtocsubsubsection=\tocsubsubsection
\renewcommand{\tocsection}[2]{\hspace{0em}\textbf{\oldtocsection{#1}{#2}}}
\renewcommand{\tocsubsection}[2]{\hspace{1.8em}\oldtocsubsection{#1}{#2}}
\renewcommand{\tocsubsubsection}[2]{\hspace{3em}\oldtocsubsubsection{#1}{#2}}
\numberwithin{equation}{section}
\theoremstyle{plain}
\newtheorem{thm}{Theorem}[section]
\newtheorem{theorem}[thm]{Theorem}
\newtheorem{lemma}[thm]{Lemma}
\theoremstyle{definition}
\theoremstyle{remark}
\newtheorem{remark}[thm]{Remark}
\renewcommand{\epsilon}{\varepsilon}
\renewcommand{\rho}{\varrho}
\renewcommand{\phi}{\varphi}
\renewcommand{\Im}{\mathrm{Im\,}}
\newcommand{\trdeg}{\text{tr.deg}}
\newcommand{\rexp}{\mathbb{R}_{\exp}}
   \def\XXint#1#2#3{{\setbox0=\hbox{$#1{#2#3}{\int}$}
        \vcenter{\hbox{$#2#3$}}\kern-.5\wd0}}
\title[A nondefinability result for Weierstrass $\wp$ functions]{A nondefinability result for expansions of the Field of Real Numbers by the restricted Weierstrass $\wp$ function}
\author{Raymond McCulloch}
\address{}
\email{raymond.mcculloch@postgrad.manchester.ac.uk}
 \subjclass[2010]{} 
\begin{document}
	
	\begin{abstract}
Suppose that $\Omega$ is a complex lattice that is closed under complex conjugation and that $I$ is a small real interval, and that $D$ is a disc in $ \mathbb{C}$. Then the restriction $\wp|_D$ is definable in the structure $(\bar{\mathbb{R}},\wp|_I)$ if and only if the lattice $\Omega$ has complex multiplication. This characterises lattices with complex multiplication in terms of definability.

	\end{abstract}

\maketitle
	\section{Introduction}
Since the 1980's model theorists have studied expansions of the real ordered field $\bar{\mathbb{R}}=(\mathbb{R},<,+,\cdot,0,1)$ by various analytic functions. A major result in this area was due to Wilkie, who in \cite{wilkieexp} proved the model completeness of $\rexp=(\bar{\mathbb{R}},\exp)$ where $\exp$ is the real exponential function and this lead to much further activity. \par In particular in \cite{bianconi} Bianconi used Wilkie's ideas together with a functional transcendence result due to Ax to show that no arc of the sine function is definable in $\rexp$. If $\mathbb{C}$ is identified with $\mathbb{R}^2$, this result may be rephrased to say that no restriction  of the complex exponential function to an open disc in $\mathbb{C}$ is definable in $\rexp$. Bianconi went further in \cite{bianconiundef} and showed that if $f:D\rightarrow\mathbb{C}$ is holomorphic and definable in $\rexp$ then $f$ is algebraic. This result is used by Peterzil and Starchenko in \cite{newtonpetstar} to characterise all definable locally analytic subsets of $\mathbb{C}^n$ in the structure $\rexp$.
\par Formulated in this way, the question can be generalised to other functions. There certainly are transcendental functions $f$ such that there are discs $D\subseteq\mathbb{C}$ with $f|_D$ definable in the structure $(\bar{\mathbb{R}},f|_I)$ for some interval $I\subseteq\mathbb{R}$. Indeed it turns out that examples occur with functions not so different from $\exp$. Recall that to a lattice $\Omega\subseteq \mathbb{C}$ (i.e a discrete subgroup of rank 2) Weierstrass associated a meromorphic function, $$\wp(z)=\wp_\Omega(z)=\frac{1}{z^2}+\sum_{\substack{\omega\in\Omega^*}}\left(\frac{1}{(z-\omega)^2}-\frac{1}{\omega^2}\right),$$ where $\Omega^*=\Omega\setminus\{0 \}$. This meromorphic function $\wp$ has poles at exactly the points in $\Omega$ and is periodic with respect to $\Omega$. In Section \ref{background} we shall state the addition formula for $\wp$ and its differential equation. We can see that $\wp$ is similar to $\exp$ as they are both periodic and have an addition formula and a differential equation. They are also similar as they both give an exponential map of a commutative algebraic group. \par In the course of his investigations into the model theory of these elliptic functions, Macintyre observed in \cite{macwp} that if $\Omega=\mathbb{Z}+i\mathbb{Z}$ then in the structure $(\bar{\mathbb{R}},\wp|_{(1/8,3/8)})$ the restriction of $\wp$ to any disc $D$ on which $\wp$ is analytic is definable. (The interval (1/8,3/8) is chosen for convenience as it avoids both poles of $\wp$ and the zeros of $\wp^\prime$. Any such interval may be chosen.)
\par However the lattice $\Omega=\mathbb{Z}+i\mathbb{Z}$ is rather special. It can easily be seen that $\wp(iz)=-\wp(z)$. This formula is all that is required to prove the aforementioned observation of Macintyre in \cite{macwp}. In particular there are non-integer $\alpha \in \mathbb{C}$ such that $\alpha\Omega\subseteq\Omega$. (Any $\alpha \in i\mathbb{Z}$ for example). Such lattices are said to have \textit{complex multiplication}. The lattice $\Omega=\mathbb{Z}+i\mathbb{Z}$ is also closed under complex conjugation in other words $\bar{\Omega}=\Omega$. Lattices satisfying this latter property are known as \textit{real lattices}. It can be seen in Section 18 of \cite{duval} that if $\Omega$ is a real lattice then $\wp$ is real valued when restricted to an interval on either the real or imaginary axis. Hence such restrictions of $\wp$ are real analytic functions and it is natural to consider the model theory of these restrictions. In Section \ref{mac} it is shown that Macintyre's observation extends easily to all real lattices with complex multiplication. It can then be shown that this characterises those $\Omega$ for which $\wp|_I$ for some finite real interval $I$ defines $\wp|_D$ for some disc $D$. This is done in the following theorem.
\begin{theorem}\label{theorem}
	Let $\Omega$ be a real lattice and let $\wp_{\Omega}$ be its $\wp$ function. Let $I$ be some real interval not containing a pole. Then there's a non-empty disc $D \subseteq \mathbb{C}$ such that $\wp|_D$ is definable in the structure $(\bar{\mathbb{R}},\wp|_I)$ if and only if there's an $\alpha \in \mathbb{C}\setminus\mathbb{Z}$ such that $\alpha\Omega\subseteq\Omega$.
\end{theorem}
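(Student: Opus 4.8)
The theorem is a biconditional, so I would split it into the two implications. The ``if'' direction is the content of Section~\ref{mac}: assuming there is an $\alpha\in\mathbb{C}\setminus\mathbb{Z}$ with $\alpha\Omega\subseteq\Omega$, one invokes Macintyre's argument (as extended to real lattices with complex multiplication) to produce a disc $D$ on which $\wp|_D$ is definable from $\wp|_I$; I would simply cite that result. So the real work is the ``only if'' direction, and here is the plan. Suppose, for contradiction, that $\Omega$ has no complex multiplication (equivalently $j(\Omega)$ is transcendental, or more relevantly the endomorphism ring is just $\mathbb{Z}$) but that nonetheless some $\wp|_D$ is definable in $(\bar{\mathbb{R}},\wp|_I)$. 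The strategy is to run the Bianconi-type argument: first reduce to a statement about definable holomorphic functions, then apply a functional-transcendence / Ax-type input to force an algebraic relation, and finally contradict the absence of complex multiplication.

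\textbf{Step 1: From $\wp|_I$ to an o-minimal setting.} I would first observe that $\wp|_I$ is a restricted Pfaffian-type function: because $\wp$ satisfies the differential equation $(\wp')^2 = 4\wp^3 - g_2\wp - g_3$ and the addition formula, the structure $(\bar{\mathbb{R}},\wp|_I)$ is contained in an o-minimal expansion of $\bar{\mathbb{R}}$ (one can realize $\wp|_I$ and $\wp'|_I$ inside $\mathbb{R}_{\mathrm{an}}$, or inside a Pfaffian closure). This gives o-minimality, hence all the usual tameness: definable functions are piecewise analytic, definable sets have finitely many components, etc. In particular, if $\wp|_D$ is definable then, identifying $\mathbb{C}=\mathbb{R}^2$, the real and imaginary parts of $\wp$ on $D$ are definable real-analytic functions in this o-minimal structure.

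\textbf{Step 2: An Ax--Lindemann / Ax--Schanuel input.} The heart of the matter is a functional transcendence statement for $\wp$ playing the role Ax's theorem plays for $\exp$ in Bianconi's work. Here I would use the Ax--Schanuel theorem for the Weierstrass function (equivalently, for the exponential map of the elliptic curve $E_\Omega$, or for products of such with $\mathbb{G}_a,\mathbb{G}_m$) to control which complex-analytic relations among $z,\wp(z),\wp'(z)$ can be ``seen'' by the real structure generated by the single real restriction $\wp|_I$. Concretely, definability of $\wp|_D$ in $(\bar{\mathbb{R}},\wp|_I)$ should, via a Seidenberg-type embedding of the germ at a real point into the complex domain together with the o-minimal Ax theorem of Peterzil--Starchenko, yield a nontrivial algebraic correspondence between the two elliptic-curve data, i.e.\ an isogeny-type relation between $E_\Omega$ and itself induced by an $\alpha$ with $\alpha\Omega\subseteq\Omega$; since $\wp|_D$ genuinely carries ``new'' analytic content unless such an $\alpha$ exists, this forces $\alpha\in\mathbb{C}$, and $\alpha\notin\mathbb{Z}$ precisely because $D$ is not obtainable by the trivial periodicity/translation definable relations already present. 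This is the step I expect to be the main obstacle: making precise how the complex disc $D$ is ``coordinatized'' over the reals, and extracting from mere definability (not just from an explicit formula) the algebraic/analytic dependence that yields $\alpha$. The technical engine will be (a) the complexification of a definable real-analytic germ, (b) the Peterzil--Starchenko theory of definable complex-analytic sets in o-minimal structures, and (c) the elliptic Ax--Schanuel theorem to rule out the ``independent'' case.

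\textbf{Step 3: Closing the loop.} Once Step 2 produces $\alpha\in\mathbb{C}$ with $\alpha\Omega\subseteq\Omega$, I would check that $\alpha$ cannot lie in $\mathbb{Z}$: if $\alpha\in\mathbb{Z}$ the relation $\alpha\Omega\subseteq\Omega$ is automatic and carries no information, and correspondingly the disc $D$ so obtained would already be definable by the elementary relations ($\Omega$-periodicity, the addition law, $\bar\Omega=\Omega$) available in $(\bar{\mathbb{R}},\wp|_I)$ without any genuine extension of the domain --- contradicting that $D$ is a real new disc on which $\wp$ is analytic. Hence $\alpha\in\mathbb{C}\setminus\mathbb{Z}$, i.e.\ $\Omega$ has complex multiplication. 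Combining with Step~1's ``if'' direction completes the biconditional. I would organize the write-up so that the o-minimality and complexification (Steps~1 and the setup of Step~2) are handled in the ``background'' section, the functional-transcendence input is quoted as a black box, and the Bianconi-style extraction argument forms the core of the proof section.
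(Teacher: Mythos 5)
Your ``if'' direction (citing the Macintyre-style argument of Lemma \ref{maclemma}) matches the paper, but there is a genuine gap in your ``only if'' direction: you never actually bridge from \emph{definability} of $\wp|_D$ to any analytic or algebraic relation to which a functional transcendence theorem could be applied. You yourself flag this as ``the main obstacle,'' but the tools you propose do not resolve it. O-minimality (your Step 1) only gives piecewise analyticity of definable functions; it does not produce equations relating the germ of a definable function to the generating function $\wp|_I$, and the Peterzil--Starchenko theory of definable complex-analytic sets does not do this either. The paper closes this gap in a quite different way: it uses \emph{model completeness} of $(\bar{\mathbb{R}},\wp|_I)$ (via Gabrielov) together with Wilkie's implicit-definition (``desingularisation'') theorem in the Jones--Wilkie form, after first replacing $\wp|_I$ by total functions $\wp\circ B$, $B$, $B_1$ with $B$ an algebraic bijection of $\mathbb{R}$ onto $I$ (Section \ref{impdefn}). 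This represents $f_1(t)=\wp(iB(t))$ as a coordinate of a solution of a non-singular system $F_1=\dots=F_n=0$ whose entries are algebraic in $x_1,\dots,x_{n+1}$ and $\wp(B(x_1)),\dots,\wp(B(x_{n+1}))$; that explicit system is what all subsequent transcendence counting is performed on. Without model completeness and this implicit definition, your Step 2 has no input to feed into any Ax-type theorem.

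Moreover, the way you propose to use functional transcendence runs in the wrong direction relative to what is available. You want an elliptic Ax--Schanuel statement to \emph{output} an endomorphism $\alpha$ with $\alpha\Omega\subseteq\Omega$; but such theorems require as input an algebraic relation among $z$, $\wp(z)$ and the other data, which is precisely what is missing, and they conclude facts about (weakly) special subvarieties, not directly the existence of an isogeny forced by abstract definability. The paper instead argues by contradiction: assuming no complex multiplication, it applies the Ax/Brownawell--Kubota theorem (Theorem \ref{BK}), for which only $\mathbb{Q}$-linear independence of $iB(t)$, $B(t)$, $B(f_1(t)),\dots,B(f_n(t))$ is needed (proved via minimality of $n$ and the real/non-real trick), to get a lower bound $n+3$ on a transcendence degree, and then contradicts this with an upper bound $n+2$ obtained from a rank computation on the matrix $(\partial P_i/\partial y_j)$ attached to the implicit system, following Bianconi. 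Your Step 3 discussion of why $\alpha\notin\mathbb{Z}$ is therefore moot as written, since no mechanism in your proposal actually produces $\alpha$ in the first place.
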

In Section \ref{theoremproof} we give the proof of Theorem \ref{theorem}. The theorem is proved by adapting the method of Bianconi in \cite{bianconi}. In \cite{jks} Jones, Kirby and Servi attempt to apply this method to answer questions about the local interdefinability of Weierstrass $\wp$ functions. However it turns out this method cannot be applied to their problem and they turn to the method of predimensions of Hrushovski in \cite{ehstrongmin}. In fact their method cannot be used here. The method of Bianconi involves using a theorem of Wilkie on smooth functions that are defined implicitly that was proved generally by Jones and Wilkie in \cite{jw}. Bianconi refers to these methods of Wilkie as the ``Desingularisation Theorem". We shall obtain this implicit  definition in Section \ref{impdefn} and also explain why this implicit definition may be used to prove Theorem \ref{theorem}.
\section{Background on the Weierstrass $\wp$ function}\label{background}
Here we give all the background that shall be needed on the Weierstrass $\wp$ function. All of this can be found in the books \cite{kc} or \cite{silverman}. The function $\wp$ is analytic except at its poles which are at exactly the points in the lattice $\Omega$. Also $\wp$ is doubly periodic and $\Omega$ is its period lattice. Clearly $\wp$ depends on $\Omega$. As mentioned in the introduction to this paper $\wp$ has an addition formula, which is now stated. Let $z,w$ be complex numbers such that $z-w \notin \Omega$. Then, \begin{equation}
\label{wpadd}\wp(z+w)=\frac{1}{4}\left(\frac{\wp^{\prime}(z)-\wp^{\prime}(w)}{\wp(z)-\wp(w)}\right)^2-\wp(z)-\wp(w).
\end{equation}This gives rise to the duplication formula,  \begin{equation}\label{wpdup}
\wp(2z)=\frac{1}{4}\left(\frac{\wp^{\prime\prime}(z)}{\wp^{\prime}(z)}\right)^2 -2\wp(z).
\end{equation} Another important property of $\wp$ is that it satisfies a differential equation namely \begin{equation}\label{wpdiff}
(\wp^{\prime}(z))^2=4\wp^3(z)-g_2\wp(z)-g_3,
\end{equation} where the complex numbers $g_2$ and $g_3$ are called the \textit{invariants of $\wp$}. From this differential equation it is clear that there is an algebraic relation between $\wp$ and $\wp^{\prime}$. Differentiating both sides of this equation gives us that \begin{equation}\label{wp2diff}
\wp^{\prime\prime}(z)=6\wp^2(z)-\frac{g_2}{2}.
\end{equation} \par An \textit{elliptic function} $f$ is a meromorphic function on $\mathbb{C}$ which has two complex periods $\omega$ and $ \omega^{\prime}$ such that $\Im (\omega^{\prime}/\omega)>0$. These periods generate a lattice of periods $\Omega$. If we denote the field of elliptic functions for a fixed period lattice $\Omega$ by $L$ then it is known that this field $L$ is in fact $\mathbb{C}(\wp,\wp^\prime)$. This can be seen in Theorem 3.2 in Chapter 6 of \cite{silverman}.
\begin{remark}\label{reallattices}
	From Section 2 of \cite{duval} it can be seen that real lattices may be split into two cases which are known as the \textit{rectangular} and \textit{rhombic} lattices. A lattice $\Omega$ is called a \textit{rectangular lattice} if we can choose generators $\omega_1$ and $\omega_2$ of $\Omega$ such that $\omega_1$ is real and $\omega_2$ is purely imaginary. The lattice $\Omega$ is called a \textit{rhombic lattice} if we can choose generators of $\Omega$, namely $\omega_1$ and $\omega_2$ such that $\overline{\omega_1}=\omega_2$.\\ In Section \ref{theoremproof} we give the proof of Theorem \ref{theorem} in the rectangular lattice case, the proof of the rhombic case is similar.
\end{remark}
\par Finally we conclude this background section by stating a version of Ax's theorem for the Weierstrass $\wp$ function. This is a theorem of Ax in \cite{axadg1} and Brownawell and Kubota in \cite{bkpaper}.

\begin{theorem}\label{BK}
	Let $\Omega$ be a complex lattice which does not have complex multiplication. Let $z_1,\dots,z_n$ be power series with complex coefficients and no constant term that are linearly independent over $\mathbb{Q}$. Then we have that $$\trdeg_{\mathbb{C}}\mathbb{C}[z_1,\dots,z_n,\wp(z_1),\dots,\wp(z_n)]\ge n+1.$$
\end{theorem}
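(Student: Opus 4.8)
The plan is to derive Theorem~\ref{BK} from the differential-algebraic Ax--Schanuel theorem for the elliptic curve
\[
E:\quad y^2=4x^3-g_2x-g_3,
\]
following Ax~\cite{axadg1} (an alternative treatment is given by Brownawell and Kubota in~\cite{bkpaper}). First I would reduce to a single power-series variable: by a standard specialisation $z_i(t_1,\dots,t_m)\mapsto z_i(a_1t,\dots,a_mt)$ with $a\in\C^m$ generic --- which preserves nonvanishing and $\Q$-linear independence of the $z_i$ while the relevant transcendence degree can only drop --- it suffices to treat $z_1,\dots,z_n\in t\C[[t]]$ with the derivation $\partial=\tfrac{d}{dt}$ on $F:=\C((t))$, whose constant field is $\C$. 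Put $u_i=\wp(z_i)$ and $v_i=\wp'(z_i)$ in $F$ (the pole of $\wp$ at $0$ only contributes finitely many negative-degree terms). By \eqref{wpdiff} we have $v_i^2=4u_i^3-g_2u_i-g_3$, so $P_i:=(u_i,v_i)$ is an $F$-point of $E$; by the chain rule $\partial u_i=v_i\,\partial z_i$; and by \eqref{wp2diff}, $\partial v_i=(6u_i^2-\tfrac{g_2}{2})\,\partial z_i$. Since $v_i$ is algebraic of degree $\le 2$ over $\C(u_i)$, the quantity to be bounded below by $n+1$ is exactly $d:=\trdeg_\C K$, where $K:=\C(z_1,\dots,z_n,u_1,\dots,u_n,v_1,\dots,v_n)$. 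Finally, record that since the $z_i$ have no constant term, their $\Q$-linear independence upgrades to $\Q$-linear independence modulo $\C$ (a nonzero rational combination lying in $\C$ would have to vanish).

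Now I would run Ax's argument by contradiction: assume $d\le n$. Work in the module of Kähler differentials $\Omega^{1}_{K/\C}$, a $K$-vector space of dimension $d$; the derivation $\partial|_K\colon K\to F$ induces a $K$-linear map $\bar\partial\colon\Omega^{1}_{K/\C}\to F$, which is nonzero (e.g.\ $\bar\partial(\mathrm{d}z_1)=\partial z_1\ne 0$), so $\ker\bar\partial$ has $K$-dimension at most $d-1\le n-1$. For each $i$ set
\[
\theta_i:=\mathrm{d}z_i-\frac{1}{v_i}\,\mathrm{d}u_i\ =\ \mathrm{d}z_i-P_i^{*}\!\Bigl(\frac{\mathrm{d}x}{y}\Bigr)\ \in\ \Omega^{1}_{K/\C}.
\]
Using $v_i^2=4u_i^3-g_2u_i-g_3$ one checks $\mathrm{d}(v_i^{-1}\mathrm{d}u_i)=0$, so each $\theta_i$ is a closed form; and $\bar\partial(\theta_i)=\partial z_i-v_i^{-1}\partial u_i=0$, so the $n$ forms $\theta_1,\dots,\theta_n$ all lie in $\ker\bar\partial$. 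Since $\dim_K\ker\bar\partial<n$, the $\theta_i$ are $K$-linearly dependent; a refinement of this step --- the technical core of~\cite{axadg1}, which exploits that the $\theta_i$ are closed and pushes the derivation further --- produces a nonzero tuple $(c_1,\dots,c_n)$ of constants with $\sum_i c_i\theta_i$ exact, say $\sum_i c_i\theta_i=\mathrm{d}h$ for some $h\in K$. Since $\sum_i c_i\,\mathrm{d}z_i=\mathrm{d}(\sum_i c_iz_i)$ is already exact, this says that $\sum_i c_i\,P_i^{*}(\mathrm{d}x/y)$ is exact modulo $\mathrm{d}K$.

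The step I expect to be the main obstacle is the final one: Ax's lemma on exact logarithmic forms for the algebraic group $E$. It asserts that such an exact relation among the pulled-back invariant differentials $P_i^{*}(\mathrm{d}x/y)$ can only come from a nontrivial $(\operatorname{End}(E)\otimes\Q)$-linear dependence of the points $P_i$ modulo $E(\C)$ --- equivalently, a relation $\sum_i\phi_i(z_i)\in\C$ with $\phi_i\in\operatorname{End}(E)\otimes\Q$ not all zero. This is exactly where the hypothesis of Theorem~\ref{BK} is used: because $\Omega$ has no complex multiplication, $\operatorname{End}(E)=\Z$, so the relation becomes $\sum_i c_i'z_i\in\C$ with $c_i'\in\Q$ not all zero, contradicting the $\Q$-linear independence of the $z_i$ modulo $\C$ established above. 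Hence $d\ge n+1$. Two final remarks: the no-complex-multiplication hypothesis cannot be dropped, since if $\alpha\Omega\subseteq\Omega$ for some $\alpha\in\C\setminus\Z$ then $\wp(\alpha z_1)$ is algebraic over $\C(z_1,\wp(z_1))$ through the complex-multiplication isogeny, so taking $z_2=\alpha z_1$ gives $d=2<n+1=3$; and in practice one need not reprove Ax's form lemma, but may instead quote the Ax--Schanuel theorem for semiabelian varieties from~\cite{axadg1}, of which Theorem~\ref{BK} is the instance $G=E$.
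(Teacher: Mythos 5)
The paper does not actually prove Theorem \ref{BK}; it imports it as a known result of Ax \cite{axadg1} and Brownawell--Kubota \cite{bkpaper}, and your sketch reconstructs exactly that differential-form argument (reduction to one variable, the closed forms $\theta_i=\mathrm{d}z_i-v_i^{-1}\mathrm{d}u_i$ lying in $\ker\bar{\partial}$, and the invariant-form lemma for $E$, which is precisely where the no-complex-multiplication hypothesis enters). Since the two steps you leave unproved --- passing from $K$-linear dependence of closed forms to an exact constant-coefficient combination, and the exactness lemma for pullbacks of $\mathrm{d}x/y$ --- are the core of the cited results, your proposal is in substance the same appeal to \cite{axadg1} and \cite{bkpaper} that the paper makes, and it is correct as such.
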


\section{Macintyre's Result}\label{mac}
Recall that if a complex lattice $\Omega$ has complex multiplication then there's a non-integer $\alpha$ such that $\alpha \Omega\subseteq\Omega$.
\begin{lemma}\label{maclemma}
	Let $\Omega$ be a real lattice that is closed under complex conjugation and let $\wp_\Omega$ be its $\wp$ function. Let $I$ be some real interval not containing a pole. Then the restriction of $\wp$ to any complex disc not meeting any poles is definable in the structure $(\bar{\mathbb{R}},\wp|_I)$.
\end{lemma}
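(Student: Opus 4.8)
The plan is to use the complex multiplication of $\Omega$ to push definability off the real line, and then to iterate the addition formula \eqref{wpadd} to spread it over all of $\C$. For the preliminaries: shrinking $I$, we may assume it is an open interval, and since $\overline{\Omega}=\Omega$ the restriction $\wp|_I$ is a real-analytic function $I\to\R$, whose derivative $\wp'|_I$ is definable in $(\bar{\R},\wp|_I)$ because the graph of a derivative is cut out by the first-order formula expressing the $\varepsilon$--$\delta$ definition of the relevant limit (this uses no o-minimality, which is not available here). Since $\Omega$ has complex multiplication there is $\alpha\in\C\setminus\mathbb{Z}$ with $\alpha\Omega\subseteq\Omega$, and a short argument with the integer matrix by which $\alpha$ acts on a $\mathbb{Z}$-basis of $\Omega$ shows that such an $\alpha$ is necessarily non-real. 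The map $z\mapsto\wp(\alpha z)$ is then $\Omega$-periodic and even, so it lies in $\C(\wp)$ by Theorem 3.2 of Chapter 6 of \cite{silverman}: write $\wp(\alpha z)=R(\wp(z))$ with $R\in\C(X)$.

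Next I would produce a genuinely two-dimensional region of definability. Shrinking $I$ once more so that $\alpha I$ misses $\Omega$ and $\wp(I)$ misses the finitely many poles of $R$, the map $x\mapsto\wp(\alpha x)=R(\wp(x))$ on $I$ is definable, being the composite of the definable function $\wp|_I$ with the rational map $R$ (which, viewed on $\C\cong\R^2$, is semialgebraic over the parameters given by the coefficients of $R$); hence $\wp|_{\alpha I}$, and so — differentiating — $\wp'|_{\alpha I}$, is definable. Feeding $z$ from a subinterval $I'\subseteq I$ and $w$ from a subinterval $(\alpha I)'\subseteq\alpha I$ into \eqref{wpadd}, on the open set where $\wp(z)\ne\wp(w)$, shows that $\wp$ — and then, differentiating, $\wp'$ — is definable on the parallelogram $V:=I'+(\alpha I)'$, which is genuinely open in $\C$ exactly because $1$ and $\alpha$ are $\R$-linearly independent. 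Shrinking once more so that $V\cap\Omega=\emptyset$, and replacing $V$ by $V\cup(-V)$ (using that $\wp$ is even and $\wp'$ odd), we may take $V$ symmetric.

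Finally I would bootstrap. An induction on $n$, each step an application of \eqref{wpadd} as above — any $\zeta$ in the $n$-fold sumset $V+\cdots+V$ equals $u+v$ with $u$ in the $(n-1)$-fold sumset, $v\in V$ and $\wp(u)\ne\wp(v)$, outside a lower-dimensional bad locus — shows that $\wp$ and $\wp'$ are definable on $(V+\cdots+V)\setminus\Omega$ for every $n$. Since $V$ is a symmetric open set, $V+V$ contains a ball $B(0,\delta)$, so the $2k$-fold sumset contains $B(0,k\delta)$; hence every disc $D$ disjoint from $\Omega$ is contained in some finite sumset $V+\cdots+V$, and $\wp|_D$ is then the restriction of the definable function $\wp|_{(V+\cdots+V)\setminus\Omega}$ to the definable set $D$, hence definable.

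The only essential use of the hypothesis is the complex-multiplication element $\alpha\notin\R$: this is precisely what lets us leave the real axis, and by Theorem \ref{BK} no such two-dimensional region of definability can exist for a lattice without complex multiplication. The remaining work is routine but needs care: the bookkeeping that keeps the successively shrunk intervals, the parallelogram $V$, and the iterated sumsets clear of the poles of $\wp$ and of $R$; and the observation that every step (differentiation, composition with $R$, restriction to the loci where \eqref{wpadd} is valid, the final restriction to $D$) stays within the definable category, which works only because finitely many operations and finite unions are ever involved.
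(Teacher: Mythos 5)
Your proof is correct and follows essentially the same route as the paper's: you use the complex multiplication element $\alpha$ to express $\wp(\alpha z)$ (and, via differentiation rather than a second rational expression in $\wp,\wp'$, also $\wp'(\alpha z)$) in terms of the data definable from $\wp|_I$, and then spread definability to a genuinely two-dimensional region and on to arbitrary discs by iterating the addition formula \eqref{wpadd}. The extra details you supply (non-reality of $\alpha$, the symmetric sumset bootstrap, avoidance of the bad loci) merely flesh out steps the paper's proof leaves implicit.
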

\begin{proof}
	We follow the proof of Macintyre in \cite{macwp} for the case $\Omega=\mathbb{Z}+i\mathbb{Z}$. This just checks the proof works in the general case. Using the extra symmetry of the lattice $\Omega$, due to complex multiplication, we now show that $\wp$ restricted to $\alpha I$, where $\alpha\in\mathbb{C}\setminus\mathbb{Z}$ is such that $\alpha\Omega\subseteq\Omega$, is definable in the structure $(\bar{\mathbb{R}},\wp|_I)$. Let $z \in I$ and $f(z)=\wp(\alpha z).$ Then for any $\omega \in \Omega$, we can see that, $$f(z+\omega)=\wp(\alpha z+\alpha\omega)=\wp(\alpha z)=f(z)$$ as $\alpha \Omega\subseteq\Omega$. Therefore $f$ is a meromorphic function that is doubly periodic with respect to the lattice $\Omega$. So $f$ is an elliptic function for the lattice $\Omega$. Hence $f$ may be written as a rational function $R$ in $\wp(z)$ and $\wp^\prime(z)$. Similarly the function $g(z)=\wp^\prime(\alpha z)$ may be written as a rational function $S$ in $\wp(z)$ and $\wp^\prime(z)$. \par Therefore both the functions $\wp$ and $\wp^\prime$ restricted to $\alpha I$ are definable in the structure $(\bar{\mathbb{R}},\wp|_I)$. Now consider some disc $D$ contained in $I\times\alpha I$. For $z\in D$ it is clear that we may write $z=x+\alpha y$ for $x,y\in I$. We can assume that $x-\alpha y \notin \Omega.$  Then by the addition formula, $$\wp(z)=\wp(x+\alpha y)=\frac{1}{4}\left(\frac{\wp^\prime(x)-S(\wp(y),\wp^\prime(y))}{\wp(x)-R(\wp(y),\wp^\prime(y))} \right)^2-\wp(x)-R(\wp(y),\wp^\prime(y)).$$ As every function in this expression is definable we have that the function $\wp|_D$ is definable in the structure $(\bar{\mathbb{R}},\wp|_I)$. Using the addition formula gives us that $\wp$ restricted to any disc in $\mathbb{C}$ is definable in $(\bar{\mathbb{R}},\wp|_I)$ as required. 
\end{proof}
\section{Getting an implicit definition}\label{impdefn}
In this section we shall describe the implicit definition that shall be needed in the proof of Theorem \ref{theorem}. This implicit definition is due to Wilkie in \cite{wilkieexp} and is referred to by Bianconi as the "Desingularisation Theorem" and was proved in a more general form by Jones and Wilkie in \cite{jw}. We let $\tilde{\mathbb{R}}=(\bar{\mathbb{R}},\mathcal{F})$ be an expansion of $\bar{\mathbb{R}}$ by a set $\mathcal{F}$ of total analytic functions in one variable, closed under differentiation that is o-minimal with a model complete theory. Desingularisation can be used to show that if the function $f:U\rightarrow \mathbb{R}$, for some $U\subseteq\mathbb{R}$, is a definable function of $\tilde{\mathbb{R}}$ then $f$ may be defined piecewise by a system of equations whose matrix of partial derivatives is non-singular. More precisely there's some interval $I^\prime \subseteq U$, some integer $n \ge 1,$ and there are certain functions $F_1,\dots,F_n:\mathbb{R}^{n+1}\rightarrow \mathbb{R}$ (see below) and functions $f_2,\dots,f_n:I^\prime\rightarrow \mathbb{R}$ such that for all $t\in I^\prime$,  $$\begin{array}{ccc}
F_1(t,f(t),f_2(t),\dots,f_n(t))=0\\\vdots\\ F_{n}(t,f(t),f_2(t),\dots,f_n(t))=0
\end{array}$$ and $$\det \left(\frac{\partial F_i}{\partial x_j}\right)_{\substack{i=1,\dots,n\\j=2,\dots,n+1}}(t,f(t),f_2(t),\dots,f_n(t))\ne 0.$$
\par Here the functions $F_1,\dots,F_n$ are polynomials in $x_1,\dots,x_{n+1}$ and $g_i(x_j)$ for $i=1,\dots,l$ and $j=1,\dots,n+1$ where $g_1,\dots,g_l\in \mathcal{F}$.  \par To apply this to our setting we can see from \eqref{wp2diff} that there is a polynomial expression for $\wp^{\prime\prime}$ in terms of $\wp$ and its invariants. Using this expression and the differential equation \eqref{wpdiff} for $\wp$ we may find similar expressions in $\wp$ and $\wp^{\prime}$ for every derivative of $\wp.$ Hence we have that the ring of terms with parameters from $\mathbb{R}$ of the structure $(\bar{\mathbb{R}},\wp|_I)$ is closed under differentiation. The structure $(\bar{\mathbb{R}},\wp|_I)$ is model complete by a theorem of Gabrielov in \cite{gab}. Bianconi also has model completeness results involving the $\wp$ function in \cite{bianconimc} however these seem difficult to apply here as they are for the complex functions rather than their restrictions to a real interval. 
\par The main obstacle to using this implicit definition in the proof of Theorem \ref{theorem} is that the function $\wp|_I$ is not a total function on $\mathbb{R}.$ In order to use this method these functions are made to be total functions on $\mathbb{R}$. We do this by composing $\wp$ with a bijection from $\mathbb{R}$ to the interval $I$. This bijection is defined for a general interval $I=(a,b)$ here. In the proof of Theorem \ref{theorem} we give an explicit interval but do not explicitly define the bijection. Firstly define $A:(a,b)\rightarrow\mathbb{R}$ by $$A(t)=\frac{t-\frac{b+a}{2}}{\left(\frac{b-a}{2}\right)^2-\left(t-\frac{b+a}{2}\right)^2},$$ which is a bijection. Differentiating gives, \begin{align*}
A^\prime (t)&= \frac{\left(\frac{b-a}{2}\right)^2+\left(t-\frac{b+a}{2}\right)^2}{\left( \left(\frac{b-a}{2}\right)^2-\left(t-\frac{b+a}{2}\right)^2\right)^2},
\end{align*}which clearly doesn't vanish. The inverse, $B=A^{-1}$ is also differentiable and $$B^\prime(t)=\frac{\left(\left(\frac{b-a}{2}\right)^2-\left(B(t)-\frac{b+a}{2}\right)^2\right)^2}{\left(\frac{b-a}{2}\right)^2+\left(B(t)-\frac{b+a}{2}\right)^2}$$ also doesn't vanish. 
Finally we define $$B_1(t)=\frac{1}{\left(\frac{b-a}{2}\right)^2+\left(B(t)-\frac{b+a}{2}\right)^2}.$$Therefore the structure $(\bar{\mathbb{R}},\wp\circ B,B,B_1)$ is an expansion of $\bar{\mathbb{R}}$ by total analytic functions on $\mathbb{R}$. The structures $(\bar{\mathbb{R}},\wp|_I)$ and $(\bar{\mathbb{R}},\wp\circ B,B,B_1)$ are equivalent in the sense that they have the same definable sets. Hence the structure $(\bar{\mathbb{R}},\wp\circ B,B,B_1)$ is also model complete and has a ring of terms with parameters from $\mathbb{R}$ that is closed under differentiation. Therefore in the proof of Theorem \ref{theorem} it is sufficient to pass from the structure $(\bar{\mathbb{R}},\wp|_I)$ to the auxiliary structure $(\bar{\mathbb{R}},\wp\circ B,B,B_1)$ and prove the theorem in this structure. In the next section it is explained how we may use this implicit definition in the structure $(\bar{\mathbb{R}},\wp\circ B,B,B_1)$ in order to prove Theorem \ref{theorem}.
\section{Proof of Theorem \ref{theorem}}\label{theoremproof}
In this section we give the proof of Theorem \ref{theorem}. The argument uses a method of Bianconi in \cite{bianconi} as well as \cite{bianconiundef}, which involves finding upper and lower bounds on the transcendence degree of some finite extension of $\mathbb{C}$ that are contradictory. The lower bound is found using Theorem \ref{BK} and the upper bound requires a similar argument to that used to prove Claim 4 in the proof of Theorem 4 in \cite{bianconiundef}.
\begin{proof}[Proof of Theorem \ref{theorem}.]
	By Remark \ref{reallattices} we know that real lattices may be divided into two cases, the rectangular and rhombic lattices. Firstly we note that one direction of Theorem \ref{theorem} is proved in Lemma \ref{maclemma}. Here we prove the other direction of Theorem \ref{theorem} for the rectangular lattice case as both cases are fairly similar. The main difference is the choice of generators of $\Omega$ that may be made and the intervals that can be chosen accordingly. The full details of both cases will appear in my PhD thesis. 
	\par Let $\Omega$ be a rectangular real lattice. From Section 19 of \cite{duval} we can see that we can choose generators for the lattice $\Omega$, namely $\omega_1$ and $\omega_2$ such that $\omega_1$ is real and $\omega_2$ is purely imaginary. This leads to two cases, when $|\omega_1|\le |\omega_2|$ and when $|\omega_2|\le |\omega_1|$. We shall want to find a real interval $I$ not containing any lattice points such that $iI$ also doesn't contain any lattice points. In the first case the interval $(\omega_1/8,3\omega_1/8)$ is chosen and in the second case the interval $(\omega_2/8i,3\omega_2/8i)$ is chosen. It suffices to prove the theorem in either case and so we assume we are in the first of these cases and that $I=(\omega_1/8,3\omega_1/8)$. Due to the discussion in the previous section we know that it suffices to prove the theorem in the auxiliary structure $(\bar{\mathbb{R}},\wp\circ B,B,B_1)$ and we therefore now pass to this structure.

	\par Now we assume for a contradiction that $\Omega$ doesn't have complex multiplication and that there's a non-empty disc $D\subseteq\mathbb{C}$ such that the restriction $\wp|_D$ is definable in the structure $(\bar{\mathbb{R}},\wp\circ B,B,B_1)$. By translating and scaling using the addition formula \eqref{wpadd} we may take $D$ to contain the interval $iI=(i\omega_1/8,3i\omega_1/8)$. As $|\omega_1|\le |\omega_2|$ the interval $iI$ doesn't contain any lattice points. Hence the function $f_1:I\rightarrow \mathbb{R}$ defined by $f_1(t)=\wp(iB(t))$ is definable in the structure $(\bar{\mathbb{R}},\wp\circ B,B,B_1)$. By the discussion in Section \ref{impdefn} there's an open interval $I^\prime\subseteq(\omega_1/8,3\omega_1/8)$, some integer $n \ge 1,$ certain functions $F_1,\dots,F_n:\mathbb{R}^{n+1}\rightarrow\mathbb{R}$ and functions $f_2,\dots,f_n:I^\prime\rightarrow\mathbb{R}$ such that for all $t\in I^\prime$, $$\begin{array}{ccc}
	F_1(t,f_1,\dots,f_n)=0\\\vdots\\F_n(t,f_1,\dots,f_n)=0
	\end{array}$$ and $$\det\left(\frac{\partial F_i}{\partial x_j}\right)_{\substack{i=1,\dots,n\\j=2,\dots,n+1}}(t,f_1,\dots,f_n)\ne 0.$$ The functions $F_1,\dots,F_n$ are polynomials in \begin{align*}
	&x_1,\dots,x_{n+1},\wp(B(x_1)),\dots,\wp(B(x_{n+1})),\\&B(x_1),\dots,B(x_{n+1}),B_1(x_1),\dots,B_1(x_{n+1}).
	\end{align*} However as $B$ and $B_1$ are algebraic and $\wp$ and $\wp^\prime$ are algebraically dependent we may take the functions $F_1,\dots,F_n$ to be algebraic in $x_1,\dots,x_{n+1}$ and $\wp(B(x_1)),\dots,\wp(B(x_{n+1}))$ after potentially moving to a smaller interval.\par Take $n$ to be minimal such that there exists an interval $I^\prime$ and functions $F_1,\dots,F_n$ in $x_1,\dots,x_{n+1}$ and $\wp(B(x_1)),\dots,\wp(B(x_{n+1}))$ such that the above system of equations and non-singularity condition holds. As we are restricted to a small interval it is possible to shrink this interval to avoid the zeros of $\wp^\prime$ if necessary. In fact the interval may be shrunk repeatedly if needed.\par The desired contradiction will be obtained by considering upper and lower bounds on the transcendence degree of a certain finite extension of $\mathbb{C}$. These bounds will be incompatible.\par Firstly we find an lower bound on transcendence degree. In order to find this lower bound we want to apply Theorem \ref{BK}. To apply this theorem it must first be shown that $B(t)-B(0),B(f_1(t))-B(f_1(0)),\dots,B(f_n(t))-B(f_n(0))$ are linearly independent over $\mathbb{Q}$. As the lattice $\Omega$ doesn't have complex multiplication this linear independence is only required over $\mathbb{Q}.$
	\par This linear independence is shown by showing that linear dependence contradicts the minimality of $n$. This linear independence argument does not depend on whether the lattice $\Omega$ is rectangular or rhombic. If $B(t)-B(0),B(f_1(t))-B(f_1(0)),\dots,B(f_n(t))-B(f_n(0))$ are assumed to be linearly dependent over $\mathbb{Q}$ then it can be shown that $B(f_n(t))-B(f_n(0))$ can be written as a $\mathbb{Q}$-linear combination of $B(t)-B(0),B(f_1(t))-B(f_1(0)),\dots,B(f_{n-1}(t))-B(f_{n-1}(0))$. For any natural number $n$ it is possible to obtain a formula $\wp(nz)$ in terms of $\wp(z)$ and $\wp^\prime(z)$ using the addition and duplication formulas \eqref{wpadd} and $\eqref{wpdup}$. Similarly we can write $\wp(z)$ as an algebraic expression in $\wp(z/n)$ and $\wp^\prime(z/n)$.  Hence there's an algebraic expression for $\wp(z/n)$ in terms of $\wp(z)$ and $\wp^\prime(z)$. Therefore the functions $F_1,\dots,F_n$ may be rewritten as functions in $t,f_1,\dots,f_{n-1}$. This gives a new system of equations in $n$ variables. If this new system has a matrix of partial derivatives that's non-zero at $(t,f_1,\dots,f_{n-1})$ then there's a contradiction to the minimality of $n$. Hence it is assumed that all the minors of the matrix of partial derivatives for this system have determinant equal to zero and from this assumption a contradiction to the non-singularity of the original system is obtained. \par Now we observe that adding $i(B(t)-B(0))$ to the list $B(t)-B(0),B(f_1(t))-B(f_1(0)),\dots,B(f_n(t))-B(f_n(0))$ doesn't destroy this linear independence. For if it did then we could write for rational $a_0,\dots,a_{n}$ not all zero, $$i(B(t)-B(0))=a_0(B(t)-B(0))+a_1(B(f_1(t))-B(f_1(0)))+\dots+a_n(B(f_n(t))-B(f_n(0))).$$ The left hand side of this expression is non-real whereas the right hand side is real for all $t$ in the interval $I^\prime$, a contradiction. Now applying Theorem \ref{BK} to $iB(t),B(t),B(f_1(t)),\dots,B(f_n(t))$ gives that \begin{align*}
	\trdeg_{\mathbb{C}}\mathbb{C}[&iB(t),B(t),B(f_1(t)),\dots,B(f_n(t)),\wp(iB(t)),\\&\qquad\wp(B(t)),\wp(B(f_1(t))),\dots,\wp(B(f_n(t)))]\ge n+3.
	\end{align*}
	Now we obtain a upper bound on the transcendence degree of the same finite extension of $\mathbb{C}$ that is smaller than this lower bound. Recall that on the interval $I^\prime$ the function $f_1(t)=\wp(iB(t))$ is defined by a system of equations $F_1,\dots,F_n$ in the variables $x_1,\dots,x_{n+1}$, whose Jacobian is non-singular. For each $i=1,\dots,n$ the function $F_i$ can be written as $$ F_i(t,f_1,\dots,f_n)=P_i(t,f_1,\dots,f_n,\wp(B(t)),\wp(B(f_1(t))),\dots,\wp(B(f_n(t)))),$$ where $P_1,\dots,P_n$ are algebraic functions in $y_1,\dots,y_{2n+2}$. This is a system of $n$ equations in $2n+2$ variables. We check that the matrix $$\left(\frac{\partial P_i}{\partial y_j}\right)_{\substack{i=1,\dots,n\\j=2,\dots,2n+2}}(t,f_1,\dots,f_n,\wp(B(t)),\wp(B(f_1(t))),\dots,\wp(B(f_n(t))))$$ is of maximal rank $n$. This is done by a similar argument to that of Claim 4 in the proof of Theorem 4 in \cite{bianconiundef}. Differentiating $F_i$ with respect to $x_j$ gives $$\frac{\partial F_i}{\partial x_j}=\frac{\partial P_i}{\partial y_j}+B^{\prime}(x_j)\wp^{\prime}(B(x_j))\frac{\partial P_i}{\partial y_{j+n+1}}.$$ Therefore the matrix $(\partial F_i/\partial x_j)$ is given by multiplying the matrix $(\partial P_i/\partial y_j)$ by a $(2n+1)\times n$ matrix $M$.
	
	Here $M$ is the matrix whose first $n\times n$ block is the identity matrix followed by a row of zeros and finally an $n \times n$ diagonal matrix diag$(B^{\prime}(x_2)\wp^{\prime}(B(x_2)),\dots,B^{\prime}(x_{n+1})\wp^{\prime}(B(x_{n+1}))).$ In other words $M$ is the matrix $$M=\begin{pmatrix}
	1&\dots&0\\\vdots&\ddots&\vdots\\0&\dots&1\\0&\dots &0\\B^\prime(x_2)\wp^\prime(B(x_2))
	&\dots&0\\\vdots&\ddots&\vdots\\0&\dots&B^\prime(x_{n+1})\wp^\prime(B(x_{n+1}))
	\end{pmatrix}.$$ By the non-singularity of the system $F_1,\dots,F_n$ it's clear that the rows of the matrix $(\partial F_i/\partial x_j)$ are linearly independent over $\mathbb{R}$ and so the rows of the matrix $(\partial P_i/\partial y_j)$ are also linearly independent over $\mathbb{R}$. So this matrix is of maximal rank $n$ as required. Therefore $$ \trdeg_{\mathbb{C}}\mathbb{C}[t,f_1(t),\dots,f_n(t),\wp(B(t)),\wp(B(f_1)),\dots,\wp(B(f_n))]\le n+2.$$ However this is an upper bound on the transcendence degree on a slightly different extension of $\mathbb{C}$. But $iB(t)$ and $B(t)$ are algebraically dependent as are $B(\wp(iB(t)))$ and $\wp(iB(t))$ and $f_i$ and $B(f_i)$ for all $i=1,\dots,n$ as $B$ is algebraic. Therefore we have that \begin{align*}
	\trdeg_{\mathbb{C}}\mathbb{C}[iB(t),\wp(iB(t)),B(t)&,\wp(B(t)),B(f_1(t)),\dots,B(f_n(t)),\\&\qquad\wp(B(f_1(t))),\dots,\wp(B(f_n(t)))]\le n+2.
	\end{align*}So we have found upper and lower bounds on the transcendence degree of some finite extension of $\mathbb{C}$ that are incompatible. Hence we have a contradiction and the theorem is proved.
\end{proof}

\bibliographystyle{abbrv}

\bibliography{A_nondefinability_result_for_expansions_of_the_ordered_real_field_by_the_weierstrass_p_function} 
\end{document}